\documentclass{amsart}
\usepackage[T1]{fontenc}
\usepackage{mathpazo}          
\usepackage[letterpaper, hmargin=1.3in, vmargin=1.35in]{geometry}

\usepackage[protrusion=true, expansion=true]{microtype}

\usepackage{amsmath, amssymb, amsthm}
\usepackage{mathtools}
\mathtoolsset{mathic=true}
\usepackage{newtxmath}
\usepackage{tikz}
\usetikzlibrary{calc}
\usepackage{enumitem}
\usepackage{mathrsfs}
\usepackage{bbm}
\usepackage{eucal}
\usepackage{bookmark}
\usepackage{keytheorems}

\newkeytheorem{theorem}[parent=section]
\newkeytheorem{corollary}[sibling=theorem]
\newkeytheorem{lemma}[sibling=theorem]
\newkeytheorem{question}[sibling=theorem]
\newkeytheorem{proposition}[sibling=theorem]
\newkeytheorem{definition}[sibling=theorem, style=definition]
\newkeytheorem{remark}[sibling=theorem, style=definition]

\newcommand{\N}{\mathbb{N}} 
\newcommand{\Z}{\mathbb{Z}} 
\newcommand{\R}{\mathbb{R}} 
\newcommand{\C}{\mathbb{C}} 
\DeclareMathOperator{\inter}{\mathrm{int}}

\newcommand{\effros}[1]{\mathcal{F}(#1)}
\newcommand{\effrosdis}[1]{\mathcal{F}_{\textrm{dis}}(#1)}
\newcommand{\vietoris}[2][]{\mathcal{K}_{#1}(#2)}
\newcommand{\holom}[1][]{\mathcal{E}_{#1}}

\newcommand{\ran}{\textbf{ran}}

\newcommand{\acts}{\curvearrowright}
\newcommand{\norm}[1]{\lVert#1\rVert}
\newcommand{\abs}[1]{\lvert#1\rvert}

\title{Separating Orbits by Entire Functions}
\author{Billy Duckworth}
\address{Department of Mathematics, Iowa State University, Ames, IA 50011, USA}

\author{Konstantin Slutsky}
\address{Department of Mathematics, Iowa State University, Ames, IA 50011, USA}
\thanks{This work was partially supported by NSF grant DMS-2153981.}

\begin{document}

\begin{abstract}
  We show that for any free probability measure-preserving action of \(\C^{d}\) on a standard
  probability space, there exists a Borel entire function \(F\) such that the factor map
  \(x \mapsto F_{x}\), where \(F_{x}(z) = F(z \cdot x)\), is injective.  This work builds on a result
  of Gl\"ucksam and Weiss, who constructed non-constant measurable entire functions for such actions.
  The proof combines a separating cross-section whose cocycle values lie in a countable subgroup with
  Forstneri{\v c}'s holomorphic approximation theorem with prescribed critical points.
\end{abstract}

\maketitle

\section{Introduction}
\label{sec:introduction}

Denote by \(\holom[d]\) the Fr\'echet space of entire functions \(\C^{d} \to \C\) equipped with the
topology of locally uniform convergence.  The group \(\C^{d}\) acts on \(\holom[d]\) by argument
shifts: \([z \cdot f](w) = f(w + z)\).  Let \(\C^{d} \acts X\) be a free Borel action of \(\C^{d}\)
on a standard Borel space.  A \emph{Borel entire function} for the action \(\C^{d} \acts X\) is a
Borel map \(F : X \to \C\) such that the function \(F_{x} : \C^{d} \to \C\) defined by
\(F_{x}(z) = F(z \cdot x)\) is entire for every \(x \in X\).  Equivalently, it is a Borel
equivariant map \(x \mapsto F_{x}\) from \(\C^{d} \acts X\) to \(\C^{d} \acts \holom[d]\).  When
\(X\) is given a structure of a standard probability space and the action \(\C^{d} \acts X\) is
measure-preserving, a \emph{measurable entire function} is a Borel function \(F : X \to \C\) that is
Borel entire when restricted to an invariant Borel subset of full measure.

In a recent work, Gl\"ucksam and Weiss~\cite{glucksam2025} constructed non-constant measurable
entire functions for free probability measure-preserving actions of \(\C^{d}\) on standard
probability spaces.  Their construction is based on an inductive approximation over polynomially
convex regions within orbits, using the Oka--Weil theorem at each step.

The purpose of this note is to prove the existence of separating measurable entire functions, i.e.,
functions \(F\) for which the factor map \(x \mapsto F_{x}\) is \emph{injective}.

\getkeytheorem{mainthm}

Recall that a compact set \(K \subseteq \C^{d}\) is \emph{polynomially convex} if its polynomial hull
\[\hat{K} = \{z \in \C^{d} : |p(z)| \le \sup_{K} |p| \text{ for all polynomials } p\}\] equals
\(K\).  A \emph{polynomially convex Borel toast} is a nested system of regions within orbits, formalized in
Definition~\ref{def:toast} and Definition~\ref{def:polynomially-convex-toast} below.  A key result
of Gl\"ucksam and Weiss is the existence of such toasts on a set of full measure for any free
probability measure-preserving action of \(\C^{d}\) (cf.~Theorem~\ref{thm:glucksam-weiss}).
Combined with Theorem~\ref{thm:injective-measurable-entire}, this leads to the following.

\getkeytheorem{maincor}

The key new ingredient is the construction of \emph{separating cross-sections} with cocycle values
in a prescribed countable subgroup.  Recall that a Borel \emph{cross-section} for a free Borel
action \(G \acts X\) of a locally compact second countable group is a Borel set
\(\mathcal{C} \subseteq X\) that meets every orbit in a discrete set.  With a cross-section
\(\mathcal{C}\) one associates the equivariant map \(\phi : X \to \effrosdis{G}\) into the space of
discrete subsets of~\(G\), given by \(\phi(x) = \{g \in G : g^{-1}x \in \mathcal{C}\}\).  A
cross-section is \emph{separating} if \(\phi\) is injective.  The \emph{cocycle} of the action is
the map~\(\rho : E_{G} \to G\) satisfying \(\rho(x,y) \cdot x = y\) for orbit-equivalent
\(x, y \in X\), i.e., \(x E_{G} y\).  Let \(E_{\mathcal{C}}\) denote the restriction of the orbit
equivalence relation onto \(\mathcal{C}\).

\getkeytheorem{mainlemma}

Separating cross-sections with cocycle values in a countable subgroup may find other applications
for establishing injectivity of equivariant maps into various function spaces.  The idea of the
proof of Theorem~\ref{thm:injective-measurable-entire} is to build an entire function whose critical
points form a separating cross-section.  The approximation tool that makes this possible is a
theorem of Forstneri{\v c}~\cite{forstneric2003}, which allows holomorphic approximation on
polynomially convex sets while prescribing the exact set of critical points.

The paper is organized as follows.  Section~\ref{sec:inject-rati-cross} constructs separating
cross-sections for free Borel actions of non-discrete non-compact locally compact second countable
groups.  Section~\ref{sec:polyn-conv-borel} reviews the notion of a Borel toast and explains the
construction of polynomially convex Borel toasts from~\cite{glucksam2025}.
Section~\ref{sec:inject-meas-entire-funct} combines these ingredients to prove
Theorem~\ref{thm:injective-measurable-entire}.

\medskip\noindent\textbf{Use of AI tools.}  AI writing assistants were used during the preparation of
this paper to aid with proofreading, editing, diagram creation, literature review, and content
critique.  All core mathematical ideas are due to the authors.

\medskip\noindent\textbf{Acknowledgments.}  This project has benefited greatly from the
work~\cite{ssw} done by M.~Sodin and A.~Wennman in collaboration with the second author.  We are
grateful to them for numerous helpful and productive conversations.

\section{Separating Rational Cross-Sections}
\label{sec:inject-rati-cross}

Let \(G\) be a locally compact second countable group.  The space \(\effrosdis{G}\) of discrete
subsets of \(G\) is naturally a subset of the Effros Borel space \(\effros{G}\) of all closed
subsets of \(G\).  Since \(\effrosdis{G}\) is Borel in \(\effros{G}\) (see, e.g.,
\cite[Sec.~5.6]{ssw}), it inherits the structure of a standard Borel space.  The group \(G\) acts on
\(\effrosdis{G}\) by left shifts, \((g, F) \mapsto gF\).

Suppose that \(G \acts X\) is a free Borel action on a standard Borel space.  Let \(E_{G}\) denote
the orbit equivalence relation of this action and let \(\rho : E_{G} \to G\) be the cocycle defined
by \(\rho(x,y) \cdot x = y\). A Borel \emph{cross-section} is a Borel set \(\mathcal{C} \subseteq X\)
such that \(\{g \in G : g^{-1}x \in \mathcal{C}\}\) is discrete in \(G\) for every \(x \in X\).
Given \(U \subseteq G\), a cross-section \(\mathcal{C}\) is \emph{\(U\)-lacunary} if
\((U \cdot x) \cap (U \cdot y) = \varnothing\) for all distinct \(x, y \in \mathcal{C}\).  We let
\(E_{\mathcal{C}}\) denote the restriction of \(E_{G}\) to \(\mathcal{C}\).

With a cross-section \(\mathcal{C}\), we associate the map \(\phi: X \to \effrosdis{G}\) given by
\[\phi(x) = \{g \in G: g^{-1}x \in \mathcal{C}\}.\]
Since \(\phi(hx) = h\phi(x)\), the map \(\phi\) is equivariant with respect to the left shift
\(G \acts \effrosdis{G}\).  A cross-section is said to be \emph{separating} if the corresponding
\(\phi\) is injective.

For a free action of a non-discrete locally compact second countable group \(G\) on a standard Borel
space, separating cross-sections are easy to build.  By Kechris's theorem~\cite{kechris1992}, for
any precompact neighborhood of the identity \(U \subseteq G\), there exists a \(U^{2}\)-lacunary
cross-section \(\mathcal{C} \subseteq X\).  Since \(G\) is non-discrete, \(U\) is uncountable, and
we may pick a Borel injection \(\omega : \mathcal{C} \to U \setminus \{e_{G}\}\).  The set
\[\mathcal{D} = \mathcal{C} \cup \{\omega(x) \cdot x : x \in \mathcal{C}\}\]
is then a separating cross-section: each orbit is uniquely determined by any cocycle value
\(\rho(x,y) \in U \setminus \{e_{G}\}\). Taking \(\omega : \mathcal{C} \to U
\setminus V^{2}\) for a small neighborhood \(V\) further ensures that \(\mathcal{D}\) is
\(V\)-lacunary.

In the Borel dynamics of \(\R^{d}\)-flows, it is convenient to work with cross-sections on which
\(\rho\) takes only countably many values, as this significantly simplifies the verification of
Borel measurability. However, the separating cross-section \(\mathcal{D}\) constructed above carries
uncountably many cocycle values, even when the original \(\mathcal{C}\) does not.  The purpose of
this section is to show that both properties can be achieved simultaneously.

\begin{lemma}[store=mainlemma]
  \label{lem:injective-cross-sections}
  Let \(G \acts X\) be a free Borel action of a non-discrete non-compact locally compact second
  countable group on a standard Borel space.  Let \(\Gamma \le G\) be a countable dense subgroup and
  let \(\mathcal{C}\) be a lacunary cross-section such that
  \(\rho(E_{\mathcal{C}}) \subseteq \Gamma\).  There exists a lacunary Borel separating cross-section
  \(\mathcal{D} \supseteq \mathcal{C}\) such that \(\rho(E_{\mathcal{D}}) \subseteq \Gamma\).
\end{lemma}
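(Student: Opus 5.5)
The plan is to encode, inside \(\mathcal{D}\), enough information to recover a point of \(\mathcal{C}\) itself. Each added point of \(\mathcal{D}\) will sit at a \(\Gamma\)-translate of a point of \(\mathcal{C}\), and each point of \(\mathcal{C}\) will receive only finitely many new points. This keeps cocycle values in \(\Gamma\) and keeps \(\mathcal{D}\) discrete. Density of \(\Gamma\) and non-discreteness of \(G\) give room to place these points in a small neighborhood of their anchor. The information will be spread along a hierarchy of markers, so that no single point carries infinitely much data.

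\emph{Step 0 (preparation).} I would first reduce to the case where \(\mathcal{C}\) is cocompact, i.e.\ \(K\mathcal{C} = X\) for some compact \(K\). Then every orbit meets \(\mathcal{C}\), and every \(E_{\mathcal{C}}\)-class is infinite because \(G\) is non-compact. To enlarge \(\mathcal{C}\), I would take a cocompact lacunary cross-section \(\mathcal{C}'\) from Kechris's theorem~\cite{kechris1992}. I would discard the points of \(\mathcal{C}'\) close to \(\mathcal{C}\), and move the remaining ones by small elements of \(G\) so that all new relative positions lie in \(\Gamma\). This is a countable Borel recursion over the countable Borel equivalence relation \(E_{\mathcal{C}'}\). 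I expect this rationalization to be the main technical obstacle: without a Borel transversal one cannot pick an anchor per orbit. The first thing I would try is to process \(\mathcal{C}'\) in countably many stages indexed by a Borel proper colouring of the (locally finite) graph ``\(\rho(c,c')\in K^{-1}K\)'' on \(\mathcal{C}'\). At each stage, an independent set of points is moved simultaneously into \(\Gamma\)-position relative to the already-fixed nearby points, using density of \(\Gamma\) and small perturbations.

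\emph{Step 1 (markers and chains).} On the countable Borel equivalence relation \(E_{\mathcal{C}}\), I would choose, by the marker lemma, Borel sets \(\mathcal{C}=M_{0}\supseteq M_{1}\supseteq M_{2}\supseteq\cdots\) with \(\bigcap_{n} M_{n}=\varnothing\). Each \(M_{n}\) should meet every class and be \(R_{n}\)-sparse but \(R_{n}'\)-cocompact within orbits, for suitable compact sets. For \(c\in M_{n-1}\), let \(\pi_{n}(c)\) be the point of \(M_{n}\) nearest to \(c\), with a Borel tie-break. Put \(c_{0}=c\) and \(c_{k}=\pi_{k}(c_{k-1})\). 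Each \(d\in M_{k}\) is then the \(k\)-th chain point of only finitely many \(c\), by lacunarity and boundedness. Fix a Borel injection \(\beta:\mathcal{C}\to 2^{\N}\).

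\emph{Step 2 (finite codes).} The point \(d\in M_{k}\setminus M_{k+1}\) (or \(d\in M_{k}\) generally) will encode the finite list of pairs \(\bigl(\rho(d,c),\ \beta(c)\restriction k\bigr)\) over all \(c\) whose \(k\)-th chain point is \(d\). This is a finite object from a countable set, since \(\rho(d,c)\in\Gamma\). Enumerating these countably many codes, I would assign to each code a distinct finite configuration \(\{\gamma_{1},\dots,\gamma_{m}\}\subseteq\Gamma\cap(W\setminus V^{2})\), where \(W\) is a small neighborhood of \(e_{G}\). The configurations are chosen so that the code is recoverable from the set and is never confused with another point of \(\mathcal{D}\); for instance, \(\mathcal{C}\)-points could be marked by a distinguished rigid sub-pattern. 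I then set \(\mathcal{D}=\mathcal{C}\cup\{\gamma_{i}d\}\), and it is Borel.

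\emph{Step 3 (separation).} Since \(\phi(x)\) determines \(\mathcal{D}\) relative to \(x\), it determines the positions of the points of \(\mathcal{C}\) near \(x\) and all the codes. Fix \(c\in\mathcal{C}\) at a known position \(g\), so \(g^{-1}x=c\). For every \(k\), the code of \(c_{k}\) contains \(\beta(c)\restriction k\), hence \(\phi(x)\) determines \(\beta(c)\), and therefore \(c\). Then \(x=gc\) is determined, because \(g\) is read off from \(\phi(x)\). Thus \(\phi\) is injective. Lacunarity follows from choosing \(W\) small and the \(\gamma_{i}\) outside \(V^{2}\), so that \(\mathcal{D}\) is \(V'\)-lacunary for some smaller \(V'\).
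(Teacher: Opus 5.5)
Your Steps 1--3 follow the paper's strategy with geometric markers in place of its abstract ones. Your relations ``same \(k\)-th chain point'' play the role of the finite relations \(F_n\), which the paper takes from an aperiodic hyperfinite subrelation together with selectors \(s_n\) of disjoint ranges, and your decoding is the paper's injectivity argument. There are two gaps, though.

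The step that would fail as written is Step~0. In your colouring recursion, a point processed when it has no already-fixed neighbours stays in an arbitrary \(\Gamma\)-coset. A later point adjacent to two such points lying in different cosets then cannot be put in \(\Gamma\)-position relative to both, and nothing in the recursion prevents this.

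The obstacle you name (``no anchor per orbit'') does not arise on orbits that meet \(\mathcal{C}\). Since \(\rho(E_{\mathcal{C}})\subseteq\Gamma\), the Borel set \(\Gamma\mathcal{C}\) meets each such orbit in a single dense \(\Gamma\)-orbit. So for a small identity neighbourhood \(U\) and any retained \(c'\in\mathcal{C}'\), the set \(\{g\in U : gc'\in\Gamma\mathcal{C}\}=U\cap\Gamma\rho(c',c)\) (for any \(c\in\mathcal{C}\) in the orbit of \(c'\)) is countable and non-empty. A Borel selection (Lusin--Novikov, or the first element in an enumeration of \(\Gamma\)) then moves every new point into the coset already fixed by \(\mathcal{C}\), with no recursion at all.

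On orbits missing \(\mathcal{C}\) you would need a \(\Gamma\)-rational cross-section built from nothing, and your recursion does not provide one. The paper tacitly assumes that \(\mathcal{C}\) meets every orbit: its argument begins by translating \(x\) into \(\mathcal{C}\), and \(\phi\equiv\varnothing\) on orbits missing \(\mathcal{D}\). You should state that assumption explicitly rather than try to handle those orbits.

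The second gap is the part you leave at ``for instance, a distinguished rigid sub-pattern''; this is where the paper does its real work. Seen through \(\phi\), the cluster at an anchor \(d\) with adjoined points \(Sd\) is a left translate of \(\{e\}\cup S^{-1}\). You therefore need a family of uniformly separated finite shapes such that no two of them, and no shape with itself, are related by a non-trivial left translation.

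The obvious choice fails whenever \(G\) has an open Boolean subgroup. Small elements are then involutions, so \(\gamma\{e,\gamma\}=\{e,\gamma\}\), and the pair \(\{d,\gamma d\}\) looks identical from both points. This is why the paper splits into cases:
\begin{itemize}
\item In the non-Boolean case it adjoins single points \(\gamma d\), with \(\gamma\) from a set \(\Delta\subseteq\Gamma\) of elements of order \(>2\) satisfying \(\Delta\cap\Delta^{-1}=\varnothing\).
\item In the Boolean case it adjoins triangles \(d,\ g_0\gamma_k d,\ g_1\gamma_k d\) with \(g_0,g_1,\gamma_0,\gamma_1,\dots\) linearly independent. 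The anchor is then the vertex opposite the constant edge \(g_0g_1\).
\end{itemize}

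Relatedly, when \(d\) lies in several \(M_k\), encode all of its levels in a single configuration. Adjoining one configuration per level at the same anchor can create spurious anchor-like patterns. The paper avoids this through the disjoint ranges of the selectors \(s_n\).
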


\begin{figure}[ht]
\centering
\begin{tikzpicture}[
    cpt/.style={circle, fill, inner sep=0pt, minimum size=4.5pt},
    dpt/.style={circle, draw, semithick, fill=white, inner sep=0pt, minimum size=4.5pt},
    >=stealth,
]

\begin{scope}

  \draw[rounded corners=14pt, densely dashed, gray!70!black]
    (-0.4, -1.1) rectangle (5.3, 1.5);
  \node[gray!70!black, font=\scriptsize] at (2.45, -1.4) {$F_n$-class};

  \node[cpt, label={[font=\footnotesize]below:$x_1$}] (c1) at (0.4, 0.6) {};
  \node[cpt, label={[font=\footnotesize]below:$x_2$}] (c2) at (1.8, -0.5) {};
  \node[cpt, label={[font=\footnotesize]below:$x$}] (cx) at (3.2, 0.3) {};
  \node[cpt, label={[font=\footnotesize]below:$x_4$}] (c4) at (4.6, 0.7) {};

  \node[cpt] at (-1.6, -0.3) {};
  \node[cpt] at (6.5, 0.5) {};

  \draw[gray, densely dashed] (cx) circle (0.5cm);

  \coordinate (ztl1) at (8, 2.1);
  \draw[rounded corners=5pt, semithick] (ztl1) rectangle (10.8, -0.6);

  \draw[gray, thin] ($(cx)+(0.35,0.35)$) -- (ztl1);
  \draw[gray, thin] ($(cx)+(0.35,-0.35)$) -- (8,-0.6);

  \node[cpt, label={[font=\footnotesize]below:$x$}] (zx1) at (9.4, 0) {};
  \node[dpt, label={[font=\footnotesize]right:{$\gamma \cdot x$}}] (zgx) at (9.4, 1.6) {};
  \draw[->, semithick] (zx1) -- node[right, font=\footnotesize] {$\gamma$} (zgx);

  \node[font=\bfseries\footnotesize] at (4.5, -2.1) {Case 1 (non-Boolean)};

\end{scope}

\begin{scope}[yshift=-6cm]

  \draw[rounded corners=14pt, densely dashed, gray!70!black]
    (-0.4, -1.1) rectangle (5.3, 1.5);
  \node[gray!70!black, font=\scriptsize] at (2.45, -1.4) {$F_n$-class};

  \node[cpt, label={[font=\footnotesize]below:$x_1$}] (d1) at (0.4, 0.6) {};
  \node[cpt, label={[font=\footnotesize]below:$x_2$}] (d2) at (1.8, -0.5) {};
  \node[cpt, label={[font=\footnotesize]below:$x$}] (dx) at (3.2, 0.3) {};
  \node[cpt, label={[font=\footnotesize]below:$x_4$}] (d4) at (4.6, 0.7) {};

  \node[cpt] at (-1.6, -0.3) {};
  \node[cpt] at (6.5, 0.5) {};

  \draw[gray, densely dashed] (dx) circle (0.5cm);

  \coordinate (ztl2) at (8, 2.5);
  \draw[rounded corners=5pt, semithick] ($(ztl2)+(-0.2,0.2)$) rectangle (11.0, -0.6);

  \draw[gray, thin] ($(dx)+(0.35,0.35)$) -- ($(ztl2)+(-0.2,0.2)$);
  \draw[gray, thin] ($(dx)+(0.35,-0.35)$) -- (7.8,-0.6);

  \node[cpt, label={[font=\footnotesize]below:$x$}] (zx2) at (9.4, 0) {};
  \node[dpt, label={[font=\footnotesize]above:{$g_0\gamma_k x$}}] (zg0) at (8.4, 2) {};
  \node[dpt, label={[font=\footnotesize]above:{$g_1\gamma_k x$}}] (zg1) at (10.4, 2) {};
  \draw[->, semithick] (zx2) --
    node[left, font=\footnotesize, pos=0.35] {$g_0\gamma_k$} (zg0);
  \draw[->, semithick] (zx2) --
    node[right, font=\footnotesize, pos=0.35] {$g_1\gamma_k$} (zg1);
  \draw[densely dashed, semithick] (zg0) --
    node[above, font=\footnotesize] {$g_0 g_1$} (zg1);

  \node[font=\bfseries\footnotesize] at (4.5, -2.1) {Case 2 (Boolean)};

\end{scope}

\end{tikzpicture}
\caption{Construction of the separating cross-section \(\mathcal{D}\).  Filled dots
  (\(\bullet\)) represent points of \(\mathcal{C}\); hollow dots (\(\circ\)) represent
  adjoined points in \(\mathcal{D} \setminus \mathcal{C}\).  The dashed circle marks the
  neighborhood of the representative \(x = s_n(x)\), shown magnified to the right.  In
  Case~1, a single point \(\gamma \cdot x\) is added, where \(\gamma \in \Delta\) encodes
  the relative positions of the class members and their labels.  In Case~2, two points
  \(g_0\gamma_k x\) and \(g_1\gamma_k x\) are added, forming a triangle with~\(x\); the
  index~\(k\) carries the same encoding.}
\label{fig:cross-section-construction}
\end{figure}
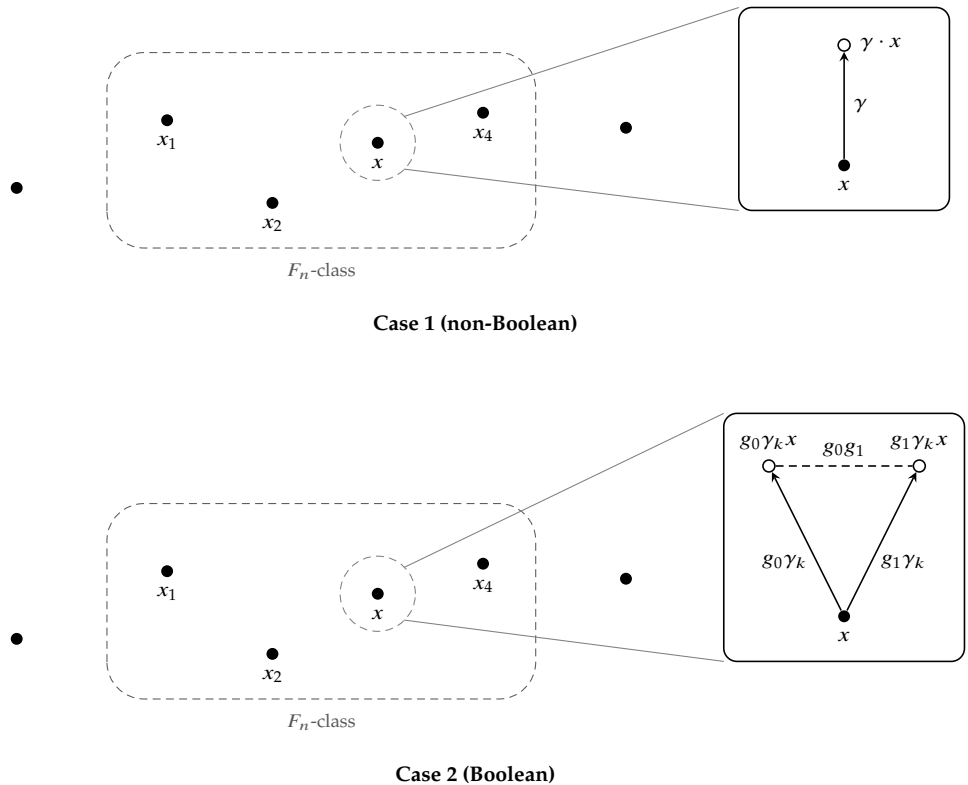

\begin{proof}
  Let \(\norm{\cdot}\) be a proper norm on \(G\), which exists by~\cite{struble1974}, and let
  \(B_{r}\) denote the closed ball of radius~\(r\).  Rescaling the norm, we may assume that
  \(\mathcal{C}\) is \(B_{2}\)-lacunary.  The construction of \(\mathcal{D}\) depends on whether
  \(G\) has an open Boolean subgroup.  Recall that a group \(H\) is \emph{Boolean} if
  \(h^{2} = e_{H}\) for all \(h \in H\); such a group is necessarily Abelian and can be viewed as a
  vector space over \(\Z/2\Z\).

  \textbf{Case 1:} Suppose that \(G\) does not have an open Boolean subgroup.  Then every
  neighborhood of the identity contains infinitely many elements of order greater than two.  In
  fact,
  there exists \(\delta > 0\) for which the set
  \begin{equation}
    \label{eq:1}
     \{g \in G : g^{2} \ne e_{G},\; 2\delta < \norm{g} < 1\}
  \end{equation}
  is infinite.  Since the condition in~\eqref{eq:1} is open and \(\Gamma\) is dense in \(G\), the
  set \eqref{eq:1} contains infinitely many elements of \(\Gamma\).  We can therefore choose an
  infinite set \(\Delta \subseteq \Gamma\) contained in the set \eqref{eq:1} and satisfying
  \(\Delta \cap \Delta^{-1} = \varnothing\).

  We construct \(\mathcal{D}\) by adjoining to \(\mathcal{C}\) points of the form \(\gamma \cdot x\)
  for selected \(x \in \mathcal{C}\) and \(\gamma \in \Delta\).  The norm bounds on elements of
  \(\Delta\) guarantee that \(\mathcal{D}\) is a \(B_{\delta}\)-lacunary cross-section.  Moreover,
  for any distinct \(x, y \in \mathcal{D}\) with \(\norm{\rho(x,y)} < 1\), one of the two points,
  say \(x\),
  lies in \(\mathcal{C}\) and the other equals \(\gamma \cdot x\) with \(\gamma = \rho(x,y) \in \Delta\).
  Since \(\rho(y,x) = \rho(x,y)^{-1}\) and \(\Delta \cap \Delta^{-1} = \varnothing\), the cocycle
  value \(\rho(x,y)\) alone determines which of \(x\), \(y\) belongs to \(\mathcal{C}\).  In the
  Boolean case treated below, this distinction cannot be read off from \(\rho(x,y)\), and we encode
  it differently.

  Since \(G\) is non-compact, we may assume without loss of generality that \(E_{\mathcal{C}}\) is
  aperiodic, and we may choose an aperiodic hyperfinite subrelation
  \(E \subseteq E_{\mathcal{C}}\),~\cite[Lem.~3.25]{jackson2002}.  Let \((F_{n})_{n \ge 0}\) be an
  increasing sequence of finite Borel equivalence relations on \(\mathcal{C}\) with
  \(E = \bigcup_{n} F_{n}\).  By~\cite[Cor.~9.7]{djk1994}, we may arrange that \(F_{0}\) is the
  equality relation and each \(F_{n}\)-class, \(n \ge 1\), is the union of exactly two
  \(F_{n-1}\)-classes.  In particular, every \(F_{n}\)-class has cardinality \(2^{n}\).

  Next, we build Borel selectors \(s_{n} : \mathcal{C} \to \mathcal{C}\), \(n \ge 1\), for the
  relations \(F_{n}\) with pairwise disjoint images.  Fix a Borel linear order
  \(\prec_{\mathcal{C}}\) on \(\mathcal{C}\) and let \(s_{1}(x)\) be the
  \(\prec_{\mathcal{C}}\)-least element of the \(F_{1}\)-class of \(x\).  For \(s_{2}\), note that
  each \(F_{2}\)-class consists of two \(F_{1}\)-classes, each of size two, so it contains two
  elements outside the range of \(s_{1}\).  We set \(s_{2}(x)\) to be the
  \(\prec_{\mathcal{C}}\)-least element of the \(F_{2}\)-class of \(x\) not in the range of
  \(s_{1}\).  Continuing inductively, we obtain Borel maps \(s_{n} : \mathcal{C} \to \mathcal{C}\),
  \(n \ge 1\), satisfying \(s_{n}(x) F_{n} x\) for all \(x \in \mathcal{C}\) and having pairwise
  disjoint ranges \(s_{n}(\mathcal{C})\).

  Pick a Borel injection \(\alpha : \mathcal{C} \to 2^{\N}\) and set
  \(\alpha_{n}(x) := \alpha(x)|_{n}\).  Pick injections
  \[\upsilon_{n} : (\Gamma \times 2^{n})^{2^{n}} \to \Delta, \quad n \ge 1,\]
  with pairwise disjoint images.  For \(x \in \mathcal{C}\) and \(n \ge 1\), write \(x_{1}, \ldots,
  x_{2^{n}}\) for the elements of the \(F_{n}\)-class of \(x\) enumerated in increasing
  \(\prec_{\mathcal{C}}\)-order, and define \(\beta_{n} : \mathcal{C} \to (\Gamma \times
  2^{n})^{2^{n}}\) by \[\beta_{n}(x) = \bigl(\rho(s_{n}(x),x_{i}),
    \alpha_{n}(x_{i})\bigr)_{i=1}^{2^{n}}.\]
  The tuple \(\beta_{n}(x)\) thus records, for each member \(x_{i}\) of the \(F_{n}\)-class of
  \(x\), its position relative to \(s_{n}(x)\) together with the first \(n\) digits of
  \(\alpha(x_{i})\). The required cross-section is
  \[\mathcal{D} = \mathcal{C} \cup \bigcup_{n \ge 1} \{\upsilon_{n}(\beta_{n}(x)) \cdot x :
    x \in s_{n}(\mathcal{C})\}.\]
  In other words, for each \(n \ge 1\) and each representative \(x \in s_{n}(\mathcal{C})\), we
  adjoin the point \(\gamma \cdot x\) where \(\gamma = \upsilon_{n}(\beta_{n}(x))\), so that the
  cocycle value \(\rho(x, \gamma \cdot x)\) encodes the structure of the \(F_{n}\)-class of \(x\)
  and the first \(n\) digits of its members' labels.

  We claim that \(\mathcal{D}\) satisfies the conclusion of the lemma.  It is evidently Borel, and
  the cocycle takes values in the countable group \(\Gamma\).  Lacunarity is
  preserved because \(2\delta < \norm{\rho(x, \gamma \cdot x)} < 1\) for all \(\gamma \in \Delta\),
  while \(\norm{\rho(x,y)} > 2\) for distinct \(x, y \in \mathcal{C}\).  It remains to verify
  injectivity.

  Suppose for contradiction that \(\phi\) is not injective, and let \(x, y \in X\) be distinct
  points with \(\phi(x) = \phi(y)\), where \(\phi : X \to \effrosdis{G}\) is the equivariant map
  associated with \(\mathcal{D}\).  By equivariance, we may replace \(x\) and \(y\) by \(gx\) and
  \(gy\) for any \(g \in G\), so assume \(x \in \mathcal{C}\).  Since \(e_{G} \in \phi(x)\) if and
  only if \(x \in \mathcal{D}\), we conclude that \(y \in \mathcal{D}\).

  Choose \(n\) large enough that \(\alpha_{n}(x) \ne \alpha_{n}(y)\), and let \(x_{0} =
  s_{n}(x)\).  Setting \(\gamma = \upsilon_{n}(\beta_{n}(x_{0}))\), we have \(\gamma \cdot x_{0}
  \in \mathcal{D}\) by construction, so \(\gamma^{-1} \in \phi(x_{0}) = \rho(x,
  x_{0})\phi(x)\).  Since \(\phi(x) = \phi(y)\), also \(\gamma^{-1} \in
  \rho(x,x_{0})\phi(y)\).  Put \(y_{0} = \rho(x,x_{0}) \cdot y\); then both \(y_{0}\) and
  \(\gamma \cdot y_{0}\) lie in \(\mathcal{D}\), with \(\gamma \in \Delta\).  By construction
  of \(\mathcal{D}\), this forces \(y_{0} \in \mathcal{C}\) and \(y_{0} = s_{n}(y_{0})\).
  (Here we use \(\gamma \ne \gamma^{-1}\): otherwise \(\gamma \cdot y_{0}\) could be the
  representative instead.)  The \(F_{n}\)-classes of \(x_{0}\) and \(y_{0}\) therefore have
  identical relative positions and matching labels up to the first \(n\) digits.  In particular,
  since \(\gamma\) encodes \(\rho(x_{0}, x)\) among the positions in the class of \(x_{0}\), the
  corresponding element \(\rho(x_{0}, x) \cdot y_{0}\) belongs to the \(F_{n}\)-class of
  \(y_{0}\).  But \(\rho(x_{0}, x) \cdot y_{0} = y\), so \(y_{0}\) is the representative of the
  \(F_{n}\)-class of \(y\), and the matching of labels gives \(\alpha_{n}(x) = \alpha_{n}(y)\),
  contradicting the choice of \(n\).

  \textbf{Case 2:} Suppose that \(G\) has an open Boolean subgroup.  Choose a precompact neighborhood \(U\) of
  the identity whose non-trivial elements all have order two, and pick a linearly independent
  sequence \((\gamma'_{n})_{n}\) in \(U\).  By precompactness, we may pass to a Cauchy
  subsequence.  The sequence \((\gamma_{n})_{n}\) defined by \(\gamma_{n} =
  \gamma'_{2n}\gamma'_{2n+1}\) remains linearly independent and satisfies \(\gamma_{n} \to e_{G}\).
  After one further passage to a subsequence, we may assume that there are two distinct elements \(g_{0},
  g_{1} \in U\) such that
  \begin{equation}
    \label{eq:2}
    2\norm{\gamma_{k}} < \min\{\norm{g_{0}}, \norm{g_{1}}, \norm{g_{0}g_{1}}\}
  \end{equation}
  and the elements \(g_{0},g_{1}, \gamma_{0}, \gamma_{1}, \ldots\) are linearly independent.  We
  construct \(\mathcal{D}\) by adjoining to certain representatives \(x \in \mathcal{C}\) the pair
  of points \(g_{0}\gamma_{k} x\) and \(g_{1}\gamma_{k} x\) for an appropriate index \(k\).
  Condition~\eqref{eq:2} ensures that the triple \(x, g_{0}\gamma_{k} x, g_{1}\gamma_{k} x\) is
  uniformly separated, preserving lacunarity.

  Formally, pick injections \(u_{n} : (\Gamma \times 2^{n})^{2^{n}} \to \N\) with
  pairwise disjoint images and set \(\upsilon_{n}(z) = \gamma_{u_{n}(z)}\).  With \(\beta_{n}\)
  defined as before, the cross-section is
  \[\mathcal{D} = \mathcal{C} \cup \bigcup_{n \ge 1} \{g_{i}\upsilon_{n}(\beta_{n}(x)) \cdot x : x \in
    s_{n}(\mathcal{C}),\; i = 0,1\}.\]
  For each representative \(x \in s_{n}(\mathcal{C})\), we thus add two points
  \(g_{0}\gamma_{k} x\) and \(g_{1}\gamma_{k} x\), where \(k = u_{n}(\beta_{n}(x))\)
  encodes the relative positions of all members of the \(F_{n}\)-class and their labels
  \(\alpha_{n}\).  The verification that \(\mathcal{D}\) satisfies the desired properties is
  analogous to Case~1.
\end{proof}

\section{Polynomially Convex Borel Toasts}
\label{sec:polyn-conv-borel}

In their recent work, Gl\"ucksam and Weiss~\cite{glucksam2025} constructed measurable entire
functions of several complex variables.  More precisely, given a free probability measure-preserving
action \(\C^{d} \acts X\) on a standard probability space \((X,\mu)\), they constructed
Borel\footnote{We cast all definitions in the context of Borel dynamics.}
functions \(F : X \to \C\) such that the maps \(F_{x} : \C^{d} \to \C\) given by
\(F_{x}(z) = F(z \cdot x)\) are non-constant entire functions for almost all \(x \in X\).  When
\(F_{x}\) is entire for all \(x \in X\), we say that \(F\) is a \emph{Borel entire} function.

Their construction is inductive over polynomially convex regions within orbits.  In Borel dynamics,
the idea of coherent and exhaustive regions is formalized through the notion of a Borel toast.  For
the purpose of this discussion, we adopt the following version of this concept
from~\cite[Def.~3.2]{ssw}.  Let \(\vietoris[*]{\C^{d}}\) denote the Vietoris space of compact
subsets of \(\C^{d}\) with non-empty interior.

\begin{definition}
  \label{def:toast}
  Let \(\C^{d} \acts X\) be a free Borel action on a standard Borel space. A Borel \emph{toast} for
  the action is a sequence \((\mathcal{C}_{n}, \lambda_{n})_{n}\) of cross-sections
  \(\mathcal{C}_{n} \subseteq X\) and Borel functions
  \(\lambda_{n} : \mathcal{C}_{n} \to \vietoris[*]{\C^{d}}\) satisfying the following
  conditions. For each \(n\), define \(R_{n}(c) = \lambda_{n}(c)\cdot c\) for
  \(c \in \mathcal{C}_{n}\), and let \(X_{n} = \bigcup_{c \in \mathcal{C}_{n}} R_{n}(c)\).
  \begin{enumerate}
  \item\label{item:toast-disjoint} \(R_{n}(c_{n}) \cap R_{n}(c'_{n}) = \varnothing\) for all
    distinct \(c_{n}, c'_{n} \in \mathcal{C}_{n}\).
  \item\label{item:toast-coherent} For all \(m < n\), \(c_{m} \in \mathcal{C}_{m}\), and
    \(c_{n} \in \mathcal{C}_{n}\), either \(R_{m}(c_{m}) \cap R_{n}(c_{n}) = \varnothing\) or
    \(R_{m}(c_{m}) \subseteq R_{n}(c_{n})\).
  \item\label{item:toast-layered} For all \(c_{n} \in \mathcal{C}_{n}\) there exists
    \(c_{n+1} \in \mathcal{C}_{n+1}\) such that
    \(R_{n}(c_{n}) \subseteq R_{n+1}(c_{n+1})\)\footnote{This property should not be confused with a
      stronger condition in Borel dynamics of \(\Z^{d}\)-actions, sometimes called
      \emph{layeredness}.  We allow regions \(R_{n-1}(c_{n-1})\) and \(R_{n}(c_{n})\) to coincide,
      and this item can be easily achieved by re-indexing the points if necessary.}.
  \item\label{item:toast-directed} For all \(c_{m_{1}} \in \mathcal{C}_{m_{1}}\) and
    \(c_{m_{2}} \in \mathcal{C}_{m_{2}}\) satisfying \(c_{m_{1}} E c_{m_{2}}\), there exist
    \(n > m_{1}, m_{2}\) and an element \(c_{n} \in \mathcal{C}_{n}\) such that
    \(R_{m_{i}}(c_{m_{i}}) \subseteq \inter R_{n}(c_{n})\) for \(i = 1,2\), where
    \(\inter R_{n}(c_{n}) = (\inter \lambda_{n}(c_{n})) \cdot c_{n}\).
  \item\label{item:toast-lacunary} There exists a neighborhood of the origin \(U \subseteq \C^{d}\) such that
    \(U \subseteq \lambda_{n}(c_{n})\) for all \(c_{n} \in \mathcal{C}_{n}\) and all \(n\).
  \item\label{item:toast-exhaustive} \(\bigcup_{n} X_{n} = X\).
  \item\label{item:toast-rational} The range \(\ran \lambda_{n}\) is countable for each \(n\) and
    \(\{\rho(c_{m},c_{n}) : c_{m} \in \mathcal{C}_{m}, c_{n} \in \mathcal{C}_{n}, c_{m} E
      c_{n}\}\)
    is countable.
  \end{enumerate}
  With a toast \((\mathcal{C}_{n}, \lambda_{n})_{n}\) we associate maps
  \(\pi_{n} : X_{n} \to \mathcal{C}_{n}\) given by the condition \(x \in R_{n}(\pi_{n}(x))\).
\end{definition}

Borel toasts help in constructing a function \(F : X \to \C\) as a limit of functions
\(F_{n} : X_{n} \to \C\).  During the inductive step of the construction, one considers a region
\(R_{n}(c_{n})\), \(c_{n} \in \mathcal{C}_{n}\), and all the (pairwise disjoint) regions
\(R_{n-1}(c^{i}_{n-1}) \subseteq R_{n}(c_{n})\) for \(c_{n-1}^{i} \in \mathcal{C}_{n-1}\),
\(1 \le i \le m\).  The function \(F_{n-1}\) is defined on each \(R_{n-1}(c_{n-1}^{i})\) in the
previous step of the construction.  One picks an approximate extension of \(F_{n-1}\), i.e., an
extension to a function \(F_{n} : X_{n} \to \C\) such that
\(\abs{F_{n-1}(x) - F_{n}(x)} < \epsilon\) for all \(x\) in the regions \(R_{n-1}(c_{n-1}^{i})\) and
a suitably chosen \(\epsilon > 0\).

Assumption~\eqref{item:toast-rational} in the definition of a toast serves primarily to simplify the
verification of Borel measurability of the functions \(F_{n}\) and the limit function \(F\).  More
specifically, it ensures that there are only countably many possible shapes of regions
\(R_{n}(c_{n})\) and countably many different configurations of regions \(R_{n-1}(c_{n-1}^{i})\)
inside each of them.  So, if the inductive assumption tells us that the value of \(F_{n-1}\)
depends, for instance, only on the shape of these regions, we can partition
\(\mathcal{C}_{n} = \bigsqcup_{k} \mathcal{C}_{n,k}\) into countably many pieces with the same
region configuration and define \(F_{n}\) on each piece of the partition separately by
\(F_{n}(x) = f_{n,k}(\rho(\pi_{n}(x), x))\) for \(x \in \pi_{n}^{-1}(\mathcal{C}_{n,k})\), where
\(f_{n,k} : K_{n,k} \to \C\) is any measurable function that achieves the desired precision
of approximation over \(K_{n,k} = \lambda_{n}(c_{n})\), \(c_{n} \in \mathcal{C}_{n,k}\).

In the application discussed in Section~\ref{sec:inject-meas-entire-funct}, the values of
\(F_{n-1}\) will not be determined solely by the configurations of the regions \(R_{n}(c_{n})\) and
sub-regions inside them, but they will still be determined by a countable amount of data, where the
new data will be a separating cross-section as in Lemma~\ref{lem:injective-cross-sections}.  The
Borel measurability of the limit function \(F\) is automatic for the same reason.

The second key ingredient needed to run the construction is the ability to choose the desired
approximation function \(f_{n,k}\).  For the construction of measurable entire functions of several
complex variables, this boils down to the following problem.  Given a region
\(\lambda_{n}(c_{n}) \subseteq \C^{d}\), pairwise disjoint sub-regions
\(K_{n-1,i} = \lambda_{n-1}(c_{n-1}^{i}) + \rho(c_{n}, c_{n-1}^{i})\), \(1 \le i \le m\), and
holomorphic functions \(f_{n-1, i} : K_{n-1,i} \to \C\), find a holomorphic function
\(f_{n} : \lambda_{n}(c_{n}) \to \C\) that approximates \(f_{n-1,i}\) uniformly on their
corresponding domains \(K_{n-1,i}\) up to a given, exponentially small, \(\epsilon > 0\).

In the one-dimensional case, \(d=1\), this is possible provided all regions
\(\lambda_{k}(c_{k}) \subseteq \C\) have connected complements; the existence of the desired
approximate extension \(f_{n}\) is guaranteed by Runge's theorem.  In higher dimensions,
\(d \ge 2\), one can use the Oka--Weil theorem, which requires polynomial convexity.  Unlike the
property of having connected complements, polynomial convexity is not preserved under disjoint
unions.  The precise condition needed to run the inductive construction based on the Oka--Weil
theorem is that the disjoint union
\(\bigsqcup_{i=1}^{m}(\lambda_{n-1}(c_{n-1}^{i}) + \rho(c_{n}, c_{n-1}^{i}))\) be polynomially
convex.

\begin{definition}
  \label{def:polynomially-convex-toast}
  We say that a Borel toast \((\mathcal{C}_{n}, \lambda_{n})_{n}\) is \emph{polynomially convex} if
  for all \(c_{n} \in \mathcal{C}_{n}\) the set
  \[\bigsqcup_{i=1}^{m} \bigl(\lambda_{n-1}(c_{n-1}^{i}) + \rho(c_{n}, c_{n-1}^{i})\bigr)\]
  is polynomially convex, where the union is parametrized by subregions \(R_{n-1}(c_{n-1}^{i}) \subseteq
  R_{n}(c_{n})\).
\end{definition}

The following result can be extracted from~\cite{glucksam2025}.

\begin{theorem}[{Gl\"ucksam--Weiss~\cite{glucksam2025}}]
  \label{thm:glucksam-weiss}
  Given a free probability measure-preserving action \(\C^{d} \acts X\), there exists an invariant
  Borel subset of full measure on which there exists a polynomially convex Borel toast.
\end{theorem}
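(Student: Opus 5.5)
The plan is to start from an ordinary Borel toast and modify its regions so that the polynomial convexity condition of Definition~\ref{def:polynomially-convex-toast} can be checked with a one-variable argument. The modifications destroy exhaustiveness on a set of points which, by a Borel--Cantelli argument, has measure zero; this is why the conclusion only holds on an invariant conull set.

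\textbf{Convexity criterion.} The criterion I would use is a special case of Kallin's lemma. Let \(K_{1}, \ldots, K_{m} \subseteq \C^{d}\) be compact convex sets, and suppose some complex linear functional \(\ell\) has pairwise disjoint images \(\ell(K_{i})\). Then \(\bigsqcup_{i} K_{i}\) is polynomially convex. Indeed, the sets \(\ell(K_{i})\) are disjoint compact convex subsets of \(\C\), so their union has connected complement, and the hull of \(\bigsqcup_i K_i\) lies in \(\ell^{-1}\bigl(\bigsqcup_{i}\ell(K_{i})\bigr)\). Runge's theorem gives polynomials \(q_{j}\) in one variable approximating the indicator of \(\ell(K_{j})\) on \(\bigsqcup_{i}\ell(K_{i})\). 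Testing with the compositions \(q_{j}\circ\ell\) shows that the hull splits into pieces, the \(j\)-th lying over \(\ell(K_{j})\) and contained in \(\widehat{K_{j}} = K_{j}\), because convex sets are polynomially convex. So the goal is a toast in which the subregions of each region are compact convex sets (say real boxes in \(\C^{d} = \R^{2d}\)) whose shadows under a suitable functional are disjoint. Since \(\ran\lambda_{n}\) and the cocycle values are countable, the functional \(\ell\) can be chosen from a fixed countable family, e.g.\ \(\ell(z) = z_{1} + \sum_{k \ge 2} t_{k} z_{k}\) with rational \(t_{k}\).

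\textbf{Starting toast and modification.} I would first invoke the existence of Borel toasts for free \(\R^{2d}\)-flows, with cubical regions and countably many shapes and cocycle values, as in \cite{ssw}. The difficulty is that a big cube contains many small cubes arranged in a grid, and their \(\ell\)-shadows overlap. To fix this I would shrink and thin the level-\(n\) subregions inside each level-\((n+1)\) region. Each box \(R_{n}(c_{n})\) is replaced by a long thin convex box, for instance a thin slab in the \(\operatorname{Im} z_{1}\) direction, chosen so that the projections to the \(z_{1}\)-plane of the finitely many boxes inside a given parent are pairwise disjoint. The choice depends only on the countable configuration data, so Borelness and the countability in item~\eqref{item:toast-rational} of Definition~\ref{def:toast} are preserved. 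Coherence, nesting, and item~\eqref{item:toast-lacunary} are kept by performing the modification recursively from coarse to fine, so that each modified region still contains all of its modified descendants.

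\textbf{Main obstacle: exhaustiveness.} Shrinking loses points, so the modified \(X_{n}\) no longer cover \(X\), and item~\eqref{item:toast-directed} also needs each pair of regions to lie in the interior of a later one. I would make the portion of each region removed at stage \(n\) occupy a fraction \(\epsilon_{n}\) of its Lebesgue volume, with \(\sum_{n}\epsilon_{n} < \infty\). By the invariance of \(\mu\) and the standard ``volume fraction equals measure'' identity for cross-sections, the set of points removed at stage \(n\) then has measure \(O(\epsilon_{n})\). Borel--Cantelli gives that almost every point is removed only finitely often, so it eventually lies in, and stays inside, the interiors of the nested regions. Restricting to the invariant saturation of this conull set, obtained by discarding the null saturation of the bad set, yields the required polynomially convex toast.

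The hardest step is reconciling the thin-shadow geometry, which needs the subregions to be very eccentric, with keeping the lost volume summable. I would attempt it by letting the aspect ratios grow with \(n\) while making the parent regions much larger than their children, so that the thin slabs still capture almost all of the volume.
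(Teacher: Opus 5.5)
There is a genuine gap. Your Kallin-type criterion is correct. The trouble is that requiring the subregions of a parent to have pairwise disjoint shadows under one complex linear functional is incompatible with losing only a summable fraction of volume, so the Borel--Cantelli step cannot work.

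Normalize \(\ell\) so that it is the orthogonal projection onto a complex line. If \(K_{1},\dots,K_{m}\subseteq P\) have pairwise disjoint images \(\ell(K_{i})\), Fubini gives
\[
  \sum_{i}\mathrm{vol}(K_{i}) \le \mathrm{area}\bigl(\ell(P)\bigr)\cdot\max_{i}\,\sup_{w\in\C}\mathcal{H}^{2d-2}\bigl(K_{i}\cap\ell^{-1}(w)\bigr).
\]
In your plan each \(K_{i}\) is obtained by shrinking a child cube of side \(s\) inside a parent cube of side \(L\). The fibers of \(K_{i}\) then lie in slices of a cube of side \(s\), so for every choice of \(\ell\) the right-hand side is at most \(C_{d}(s/L)^{2d-2}L^{2d}\). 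For \(d\ge 2\) you therefore keep only a vanishing fraction of each parent's volume, not a \(1-\epsilon_{n}\) fraction.

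Your proposed remedy goes in the wrong direction. Making parents much larger than children only worsens this bound. Thinning a child into an eccentric slab cannot increase its extent in the \(2d-2\) real directions of \(\ker\ell\). Filling most of a parent with sets whose \(\ell\)-shadows are disjoint forces the children to be comparable in size to the parent in those directions. That is a rigidly stacked structure which a toast with subregions in arbitrary position (as produced by Rokhlin-type arguments) does not have, and which shrinking cannot create.

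There is also a secondary problem. ``Recursively from coarse to fine'' has no starting point, since the levels of a toast are unbounded above. A modified level-\(n\) region would be cut by constraints from infinitely many ancestors, and nothing in your plan preserves its interior, let alone the neighborhood \(U\) of item~\eqref{item:toast-lacunary}.

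The missing ingredient is a convexity criterion that tolerates subregions spread in all \(2d\) real directions. The paper's argument, following Gl\"ucksam--Weiss, proceeds as follows. It starts from nested Rokhlin boxes and cuts corridors of width \(\theta_{n-1}\) along the grid hyperplanes of each parent box. It then cuts along the face hyperplanes of the subregions inside each grid box, so that the surviving pieces are boxes arranged in a (not necessarily uniform) grid. Finally it invokes \cite[Cor.~1]{glucksam2025}: a union of polynomially convex sets arranged in such a grid is polynomially convex.

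This is your own argument with all coordinates used at once. The hull lies in the product over \(j\) of the projections to the \(z_{j}\)-planes, each of which is a disjoint union of rectangles. A single cell is then isolated by a product of one-variable Runge approximants in \(z_{1},\dots,z_{d}\).

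Corridors remove only an \(O(\theta_{n-1}/\text{side length})\) fraction of volume, so the losses can be made summable. Your Borel--Cantelli step then goes through, with the toast regions taken to be connected components of \(X_{n}=\bigcap_{k\ge n}Y_{k}\); this also resolves the recursion issue. Placing centers on a rational grid gives item~\eqref{item:toast-rational}.
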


Since the terminology used in~\cite{glucksam2025} is different, we explain the connection between
their work and the formulation in Theorem~\ref{thm:glucksam-weiss}.  Within the context of ergodic
theory, a typical way of constructing a toast for a \(\C^{d}\) (or \(\R^{d}\), for that matter)
action on a subset of full measure goes as follows.  One constructs sets \(Y_{n} \subseteq X\),
\(n \in \N\), such that for almost all \(x \in X\):
\begin{enumerate}[label=(\Alph*), ref=\Alph*]
\item\label{toast-seq-components} Connected components of \(\{z \in \C^{d} : z \cdot x \in Y_{n}\}\)
  are compact for all \(n\).
\item\label{toast-seq-exhaustive} For every compact \(K \subseteq \C^{d}\) there
  exists \(n\) such that \(K \cdot x \subseteq \bigcap_{k \ge n} Y_{k}\).
\end{enumerate}
Connected components of the sets \(Y_{n}\) within orbits, as in item~\eqref{toast-seq-components},
are usually simple sets, often just boxes \(\prod_{i=1}^{2d}[a_{i},b_{i}]\).  Let
\(X_{n} := \bigcap_{k \ge n}Y_{k}\).  One argues that connected components of the sets \(X_{n}\)
essentially form the regions \(R_{n}(c_{n})\) of a toast.
Items~\eqref{item:toast-disjoint}--\eqref{item:toast-layered} are automatic; for
example,~\eqref{item:toast-coherent} is a manifestation of the fact that if \(A \subseteq B\) and
\(C_{A} \subseteq A\), \(C_{B} \subseteq B\) are connected components of \(A\) and \(B\),
respectively, then either \(C_{A} \subseteq C_{B}\) or \(C_{A} \cap C_{B} = \varnothing\).
Items~\eqref{item:toast-directed} and~\eqref{item:toast-exhaustive} use
assumption~\eqref{toast-seq-exhaustive}.  Representatives \(c_{n}\) of the connected components of
\(X_{n}\) could be taken to be the lexicographically smallest elements.  However, to satisfy
item~\eqref{item:toast-lacunary}, one needs to assume that connected components of \(X_{n}\) have
large interior and choose \(c_{n}\) to be an interior point.  Furthermore, the points \(c_{n}\) can
be shifted to ensure that they lie on the rational grid~\cite[Lem.~2.5]{slutsky2019}, i.e.,
\(\rho(c_{m},c_{n})\) has rational coordinates and therefore takes only countably many values.

The only aspect of Definition~\ref{def:toast} that is not straightforwardly related to the viewpoint
of taking \(R_{n}(c_{n})\) to be the connected components of \(X_{n}\) is the requirement that
\[\lambda_{n}(c_{n}) := \{z \in \C^{d}: z\cdot c_{n} \in R_{n}(c_{n})\}\] take only countably many
values.  Since the Vietoris space \(\vietoris[*]{\C^{d}}\) is separable, one can always modify the
shapes of the regions so that they belong to a countable dense subset of the Vietoris space.
However, even a slight change in the shape of each region \(R_{n-1}\) may violate the condition that
\(\bigsqcup_{i=1}^{m} \bigl(\lambda_{n-1}(c_{n-1}^{i}) + \rho(c_{n}, c_{n-1}^{i})\bigr)\) remains
polynomially convex.

\begin{figure}[ht]
\centering
\begin{tikzpicture}[scale=1.1,
    sq/.style={fill=gray!18, draw=black, semithick},
  ]
  \pgfmathsetmacro{\halfth}{0.1}

  \begin{scope}
    \foreach \i in {1,2,3} {
      \draw[densely dashed, gray!55] (\i,0) -- (\i,4);
      \draw[densely dashed, gray!55] (0,\i) -- (4,\i);
    }
    \draw[semithick] (0,0) rectangle (4,4);
    \filldraw[sq] (0.7,1.8) rectangle (1.7,2.8);
    \filldraw[sq] (1.6,0.6) rectangle (2.6,1.6);
    \filldraw[sq] (0.4,0.4) rectangle (1.4,1.4);
    \filldraw[sq] (2.4,2.5) rectangle (3.4,3.5);
    \node at (1.2,2.3) {\(\tilde{R}_{n-1}\)};
    \node at (2.1,1.1) {\(\tilde{R}_{n-1}\)};
    \node at (0.9,0.9) {\(\tilde{R}_{n-1}\)};
    \node at (2.9,3.0) {\(\tilde{R}_{n-1}\)};
    \node at (3.75,0.25) {\(\tilde{R}_{n}\)};
    \node[font=\scriptsize] at (2.0, -0.5) {\((a)\)};
  \end{scope}

  \begin{scope}[xshift=4.85cm]
    \filldraw[sq] (0.7,1.8) rectangle (1.7,2.8);
    \filldraw[sq] (1.6,0.6) rectangle (2.6,1.6);
    \filldraw[sq] (0.4,0.4) rectangle (1.4,1.4);
    \filldraw[sq] (2.4,2.5) rectangle (3.4,3.5);
    \foreach \i in {1,2,3} {
      \fill[white] (\i-\halfth, 0) rectangle (\i+\halfth, 4);
      \fill[white] (0, \i-\halfth) rectangle (4, \i+\halfth);
    }
    \foreach \i in {1,2,3} {
      \draw[densely dashed, gray!55] (\i,0) -- (\i,4);
      \draw[densely dashed, gray!55] (0,\i) -- (4,\i);
    }
    \draw[semithick] (0,0) rectangle (4,4);
    \draw[thin] (1-\halfth, -0.05) -- (1-\halfth, -0.3);
    \draw[thin] (1+\halfth, -0.05) -- (1+\halfth, -0.3);
    \node[font=\scriptsize] at (1, -0.5) {$\theta$};
    \node[font=\scriptsize] at (2.0, -0.5) {\((b)\)};
  \end{scope}
\end{tikzpicture}
\caption{(a): Four unit squares (shaded) inside a \(4 \times 4\) grid.  (b): The same
  configuration with corridors of width~\(\theta\) along the grid lines, splitting each unit square
  into sub-pieces.}
\label{fig:grid-corridors}
\end{figure}
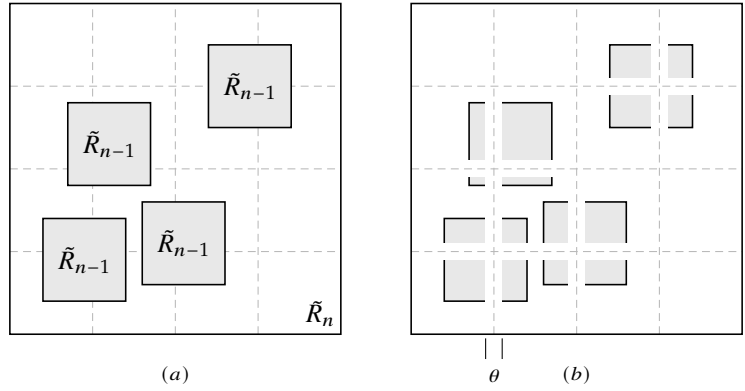

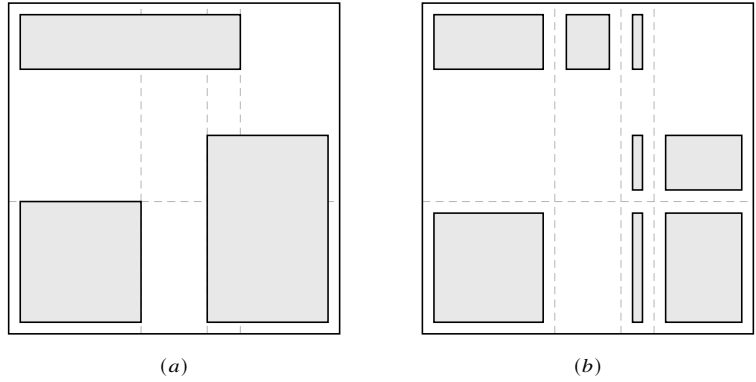
\begin{figure}[ht]
\centering
\begin{tikzpicture}[scale=1.1,
    sq/.style={fill=gray!18, draw=black, semithick},
  ]
  \pgfmathsetmacro{\halfth}{0.1}
  \pgfmathsetmacro{\halfthsm}{0.035}

  \begin{scope}[scale=4]
    \draw[densely dashed, gray!55] (1.4,1.0) -- (1.4,2.0);
    \draw[densely dashed, gray!55] (1.6,1.0) -- (1.6,2.0);
    \draw[densely dashed, gray!55] (1.7,1.0) -- (1.7,2.0);
    \draw[densely dashed, gray!55] (1.0,1.4) -- (2.0,1.4);
    \fill[sq] (1.0+\halfthsm,1.8) rectangle (1.7,2.0-\halfthsm);
    \fill[sq] (1.6,1.0+\halfthsm) rectangle (2.0-\halfthsm,1.6);
    \fill[sq] (1.0+\halfthsm,1.0+\halfthsm) rectangle (1.4,1.4);
    \draw[semithick] (1,1) rectangle (2,2);
    \node[font=\scriptsize] at (1.5, 0.9) {\((a)\)};
  \end{scope}

  \begin{scope}[xshift=5cm, scale=4]
    \fill[sq] (1.0+\halfthsm,1.8) rectangle (1.4-\halfthsm,2.0-\halfthsm);
    \fill[sq] (1.4+\halfthsm,1.8) rectangle (1.6-\halfthsm,2.0-\halfthsm);
    \fill[sq] (1.6+\halfthsm,1.8) rectangle (1.7-\halfthsm,2.0-\halfthsm);
    \fill[sq] (1.6+\halfthsm,1.0+\halfthsm) rectangle (1.7-\halfthsm,1.4-\halfthsm);
    \fill[sq] (1.6+\halfthsm,1.4+\halfthsm) rectangle (1.7-\halfthsm,1.6);
    \fill[sq] (1.7+\halfthsm,1.0+\halfthsm) rectangle (2.0-\halfthsm,1.4-\halfthsm);
    \fill[sq] (1.7+\halfthsm,1.4+\halfthsm) rectangle (2.0-\halfthsm,1.6);
    \fill[sq] (1.0+\halfthsm,1.0+\halfthsm) rectangle (1.4-\halfthsm,1.4-\halfthsm);
    \draw[densely dashed, gray!55] (1.4,1.0) -- (1.4,2.0);
    \draw[densely dashed, gray!55] (1.6,1.0) -- (1.6,2.0);
    \draw[densely dashed, gray!55] (1.7,1.0) -- (1.7,2.0);
    \draw[densely dashed, gray!55] (1.0,1.4) -- (2.0,1.4);
    \draw[semithick] (1,1) rectangle (2,2);
    \node[font=\scriptsize] at (1.5, 0.9) {\((b)\)};
  \end{scope}
\end{tikzpicture}
\caption{(a): Sub-pieces within a grid square. (b): Sub-pieces
  within a grid square cut by face hyperplanes.}
\label{fig:grid-square-corridors}
\end{figure}

The main idea behind the construction of the sets \(Y_{n}\) in~\cite{glucksam2025} is illustrated in
Figures~\ref{fig:grid-corridors} and~\ref{fig:grid-square-corridors}.  Using Rokhlin's lemma, one
starts with nested box regions \(\tilde{R}_{n}\) as in Figure~\ref{fig:grid-corridors}(a).  Each
region is a box whose side length is a multiple of the side length of \(\tilde{R}_{n-1}\).  However,
the regions \(\tilde{R}_{n-1}\) inside an \(\tilde{R}_{n}\) region may not align with the grid
inside \(\tilde{R}_{n}\).  One cuts corridors of width \(\theta_{n-1}\) along the grid hyperplanes
inside \(\tilde{R}_{n}\) as in Figure~\ref{fig:grid-corridors}(b).  Next, consider a box from the
grid and the sub-regions of the \(\tilde{R}_{n-1}\) regions inside it, after removing the corridor
regions, Figure~\ref{fig:grid-square-corridors}(a).  Cut these subregions by corridors of width
\(\theta_{n-1}\) along the hyperplanes determined by the faces of these subregions,
Figure~\ref{fig:grid-square-corridors}(b).  The resulting pieces form the sets \(Y_{n-1}\) in the
notation above.  Each connected component of \(Y_{n-1}\) is a box and is therefore (polynomially)
convex.  Gl\"ucksam and Weiss showed that a union of polynomially convex regions arranged in a (not
necessarily uniform) grid, as in Figure~\ref{fig:grid-square-corridors}(b), remains polynomially
convex~\cite[Cor.~1, p.~8]{glucksam2025}, and deduce that the union of connected components of
\(Y_{n-1}\) inside each region \(\tilde{R}_{n}\) is still polynomially convex.  If we assume that
\(\rho(c_{n},c_{m})\) has rational coordinates whenever \(c_{n}\) and \(c_{m}\) are centers of the
regions \(\tilde{R}_{n}\) and \(\tilde{R}_{m}\) (which can always be arranged
by~\cite[Lem.~2.4]{slutsky2019}), then for each region \(\tilde{R}_{n}\) there are only countably
many possible configurations of subregions.  This ensures that item~\eqref{item:toast-rational} of
Definition~\ref{def:toast} is satisfied.

For the estimates on \(\theta_{n}\) and the verification that the sets \(Y_{n}\) constructed as in
Figure~\ref{fig:grid-corridors} satisfy the required assumptions above, we refer the reader
to~\cite{glucksam2025}.  The verification of item~\eqref{toast-seq-exhaustive} specifically is
based on a Borel--Cantelli-type argument and can be found on p.~15 of~\cite{glucksam2025}.  This
concludes our explanation of how Theorem~\ref{thm:glucksam-weiss} follows from the work of
Gl\"ucksam and Weiss.

\section{Separating Measurable Entire Functions}
\label{sec:inject-meas-entire-funct}

Let \(\C^{d} \acts X\) be a free Borel action. A Borel entire function \(F : X \to \C\) can also be
viewed as a map \( X \ni x \mapsto F_{x} \in \holom[d]\), where \(\holom[d]\) denotes the space of
entire functions of \(d\) complex variables and is endowed with the topology of locally uniform
convergence.  This map is equivariant with respect to the argument shift action
\(\C^{d}\acts \holom[d]\) given by \( [z \cdot f](w) = f(w + z)\).  In other words, a Borel entire
function is a factor map from \(\C^{d} \acts X\) to \(\C^{d} \acts \holom[d]\).  The Oka--Weil
theorem, plugged into the inductive construction over a polynomially convex Borel toast, produces
measurable entire functions.  In this section, we modify this construction and explain how to
produce, for a given probability measure-preserving system \(\C^{d} \acts X\), a measurable entire
function \(F\) for which the factor map \(x \mapsto F_{x}\) is injective.

\begin{definition}
  \label{def:separating-measurable-entire}
  A measurable entire function \(F : X \to \C\) is \emph{separating} if the associated factor map
  \(x \mapsto F_{x}\) is injective on an invariant subset of full measure.  Similarly, a Borel
  entire function is \emph{separating} if the factor map is injective on all of \(X\).
\end{definition}

The idea is to pick a separating cross-section \(\mathcal{D}\) and build a measurable entire
function whose critical points are exactly the elements of the cross-section.  Recall that a
critical point of a function \(f \in \holom[d]\) is a complex vector \(z \in \C^{d}\) such that
\([\nabla f](z) = 0\).  A point \(x \in X\) is critical for \(F : X \to \C\) if \(0\) is
a critical point of \(F_{x}\).  We replace the Oka--Weil theorem with an approximation result that
takes critical points into account.  The following is a special case of a theorem of Forstneri{\v
  c}~\cite{forstneric2003}.

\begin{theorem}[{cf.~\cite[Thm.~2.1]{forstneric2003}}]
  \label{thm:forstneric}
  Let \(K \subseteq \C^{d}\) be a polynomially convex compact set and let \(f\) be a function
  holomorphic in a neighborhood of \(K\) with a finite set of critical points \(P \subseteq K\).  For any
  \(\epsilon > 0\), there
  exists an entire function \(\tilde{f} : \C^{d} \to \C\) whose critical points are exactly \(P\)
  and that satisfies
  \[\sup_{z \in K} |f(z) - \tilde{f}(z)| < \epsilon.\]
\end{theorem}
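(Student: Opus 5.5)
Since this result is quoted from~\cite{forstneric2003}, I would not reprove it in full generality. I would instead organize an argument along the lines of Forstneri{\v c}'s proof, specialized to \(\C^{d}\).

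\textbf{Setup and exhaustion.} First shrink the neighborhood of \(K\) on which \(f\) is holomorphic to a polynomially convex (Runge) compact neighborhood \(K_{0}\) of \(K\) such that \(df\) vanishes on \(K_{0}\) exactly at \(P\). This is possible because \(P\) is finite and \(K\) is polynomially convex. Next fix a strictly plurisubharmonic Morse exhaustion \(\psi\) of \(\C^{d}\), for instance a generic perturbation of \(|z|^{2}\), adapted so that \(K_{0}\subseteq\{\psi<0\}\) and \(\{\psi\le 0\}\) is polynomially convex. This gives an exhaustion \(K_{0}\subseteq K_{1}\subseteq\cdots\) by polynomially convex sublevel sets. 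Each \(K_{j+1}\) is obtained from \(K_{j}\) either by a noncritical thickening or by passing a single critical point of \(\psi\). Such a passage amounts to attaching a totally real handle of index \(\le d\).

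\textbf{Inductive step.} I would construct holomorphic functions \(f_{j}\) on neighborhoods of \(K_{j}\), with \(f_{0}=f\), satisfying three conditions:
\begin{itemize}
\item \(df_{j}\) has no zeros on \(K_{j}\setminus P\);
\item \(f_{j+1}-f_{j}\) vanishes to second order at every point of \(P\);
\item \(\sup_{K_{j}}\lvert f_{j+1}-f_{j}\rvert<\epsilon_{j}\).
\end{itemize}
The \(\epsilon_{j}\) are chosen so small that, by the Cauchy estimates, the gradients \(\nabla f_{k}\) for \(k>j\) stay bounded away from \(0\) on \(K_{j}\) minus small balls around \(P\). Near \(P\), the second-order vanishing keeps both \(f\) and the critical points in place. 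Within the balls, the total perturbation is small in the \(C^{1}\) sense and the local index is preserved, so the zeros cannot move or split. The cleanest way to guarantee this is to make each correction tiny in \(C^{2}\) and use the Weierstrass-type stability of the isolated zero of \(df\) together with the interpolation.

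Then \(\tilde f=\lim f_{j}\) is entire, satisfies \(\lvert\tilde f-f\rvert<\sum\epsilon_{j}<\epsilon\) on \(K\), and its critical points are exactly \(P\).

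\textbf{The main obstacle.} The hard step is extending \(f_{j}\) across a critical level of \(\psi\) without creating new critical points. At a noncritical level, and in general, plain Oka--Weil approximation with interpolation on \(P\) produces \(f_{j+1}\), but it may introduce zeros of \(df_{j+1}\) in \(K_{j+1}\setminus K_{j}\). To avoid this I would follow Forstneri{\v c}'s scheme:
\begin{enumerate}
\item Extend \(df_{j}\) across the attached handle \(K_{j}\cup M\) (with \(M\) totally real) as a nowhere-vanishing closed form. This is a purely topological problem. It is solvable because the handle has real dimension \(\le d\), while nonvanishing of \(df\) is a condition of real codimension \(2d\).
\item Approximate the resulting function on \(K_{j}\cup M\) by a holomorphic one. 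This uses Mergelyan-type approximation on totally real handles.
\item Pass from \(K_{j}\cup M\) to the next sublevel set by a noncritical holomorphic extension. Here I expect to use the splitting / gluing lemma for holomorphic sprays on Cartan pairs, which is the technical core of~\cite{forstneric2003}.
\end{enumerate}
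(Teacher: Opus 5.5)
Your global architecture (strictly plurisubharmonic Morse exhaustion, totally real handles, Mergelyan approximation along the handle, gluing over Cartan pairs) is Forstneri{\v c}'s. Since the paper itself only quotes the theorem, treating the noncritical extension in step~(3) as a black box is no worse than what the paper does. The part you argue yourself, however, is wrong as stated. Second-order vanishing of \(f_{j+1}-f_{j}\) at \(p \in P\), even combined with \(C^{1}\)- or \(C^{2}\)-smallness, keeps \(p\) critical. It does not prevent a \emph{degenerate} critical point from splitting. Degree theory preserves only the total multiplicity of the zeros of \(df\) in a small ball around \(p\), not the multiplicity at \(p\) itself. Take \(d=1\), \(K=\overline{\mathbb{D}}\), \(f(z)=z^{3}\), \(P=\{0\}\). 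The entire function \(f_{1}(z)=z^{3}+\eta z^{2}\) differs from \(f\) by a function vanishing to second order at \(0\), and is as \(C^{2}\)-close to \(f\) as you like. Yet \(f_{1}'(z)=z(3z+2\eta)\) has the extra zero \(-2\eta/3\), which lies in any prescribed ball around \(0\) once \(\eta\) is small. So your inductive condition that \(df_{j}\) have no zeros on \(K_{j}\setminus P\) is not preserved, and the claim that ``the zeros cannot move or split'' fails. Nothing in the statement rules out degenerate points of \(P\); for nondegenerate points your argument is fine.

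The repair is to interpolate to an order adapted to the multiplicity. Let \(J_{p}(f)=(\partial_{1}f,\dots,\partial_{d}f)\subseteq\mathcal{O}_{p}\) and \(\mu_{p}=\dim_{\C}\mathcal{O}_{p}/J_{p}(f)<\infty\); then \(\mathfrak{m}_{p}^{\mu_{p}}\subseteq J_{p}(f)\). Require \(f_{j}-f\in\mathfrak{m}_{p}^{\mu_{p}+2}\) for all \(j\) instead of second-order agreement. Then \(\partial_{i}(f_{j}-f)\in\mathfrak{m}_{p}J_{p}(f)\), so Nakayama's lemma gives \(J_{p}(f_{j})=J_{p}(f)\). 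The same holds for \(\tilde f\), because Taylor coefficients at \(p\) converge under locally uniform convergence. Thus \(p\) is a zero of \(d\tilde f\) of multiplicity exactly \(\mu_{p}\). Choose a small sphere around \(p\) on which \(\abs{df}\) is bounded below and \(d\tilde f-df\) is small by Cauchy's estimates; there the degree of \(d\tilde f\) is still \(\mu_{p}\). The zeros of \(d\tilde f\) in the ball form a compact analytic set, hence a finite set, and each contributes positive multiplicity, so \(p\) is the only one. All approximation steps in the scheme can be carried out with jet interpolation of any finite order at the finite set \(P\). With this change your outline is a faithful sketch of the argument behind the cited theorem.
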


\begin{theorem}[store=mainthm]
  \label{thm:injective-measurable-entire}
  For any free Borel action \(\C^{d} \acts X\) that admits a polynomially convex Borel toast,
  there exists a separating Borel entire function.
\end{theorem}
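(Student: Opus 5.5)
Fix a polynomially convex Borel toast \((\mathcal{C}_{n}, \lambda_{n})_{n}\) for \(\C^{d} \acts X\). Let \(\Gamma \le \C^{d}\) be the countable dense subgroup generated by all cocycle values \(\rho(c_{m},c_{n})\) from item~\eqref{item:toast-rational}; we may take it to contain \(\mathbb{Q}[i]^{d}\). I would first build a lacunary cross-section \(\mathcal{C}\) with \(\rho(E_{\mathcal{C}}) \subseteq \Gamma\) and apply Lemma~\ref{lem:injective-cross-sections} to get a separating lacunary cross-section \(\mathcal{D}\) with \(\rho(E_{\mathcal{D}}) \subseteq \Gamma\). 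For \(\mathcal{C}\), I would use the centers \(\mathcal{C}_{0}\) after thinning them by a rational-grid argument as in~\cite{slutsky2019}. I would also arrange, shrinking or adjusting \(\mathcal{D}\) if needed, that no point of \(\mathcal{D}\) lies on the boundary of any region \(R_{n}(c_{n})\), or at least that the finitely many points of \(\mathcal{D}\) in a region are well placed relative to its subregions. The target is a Borel entire \(F\) whose critical points are exactly \(\mathcal{D}\): \(0\) is critical for \(F_{x}\) iff \(x \in \mathcal{D}\). Then \(\{z : \nabla F_{x}(z)=0\} = \{z : z\cdot x \in \mathcal{D}\} = -\phi(x)\), so \(F_{x}=F_{y}\) forces \(\phi(x)=\phi(y)\), hence \(x=y\), because \(\mathcal{D}\) is separating.

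Construction by induction over the toast. At stage \(n\), for \(c_{n}\in\mathcal{C}_{n}\), the data consist of the shape \(\lambda_{n}(c_{n})\), the configuration of subregions \(\lambda_{n-1}(c_{n-1}^{i})+\rho(c_{n},c_{n-1}^{i})\), and the finite set \(P_{n}(c_{n}) = \{\rho(c_{n},y) : y\in\mathcal{D}\cap R_{n}(c_{n})\}\subseteq \Gamma\). These data, together with the previously chosen functions on the subregions (inductively determined by such data), range over a countable set. So I would partition \(\mathcal{C}_{n}=\bigsqcup_{k}\mathcal{C}_{n,k}\) and choose one entire function \(f_{n,k}\) per piece; Borel measurability then follows as in Section~\ref{sec:polyn-conv-borel}.

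The approximation step is the heart of the argument. Let \(K=\bigsqcup_{i}K_{n-1,i}\) be the union of the subregions, which is polynomially convex. On each \(K_{n-1,i}\) we are given an entire \(f_{n-1,i}\) whose critical points are exactly the (translated) points of \(\mathcal{D}\) in that subregion. I want to apply Theorem~\ref{thm:forstneric}, but two issues arise.
\begin{enumerate}
\item The function must be holomorphic with finitely many critical points on a neighborhood of the compact set used. The \(f_{n-1,i}\) are entire, but their critical sets in all of \(\C^{d}\) may be infinite. I would therefore work on small polynomially convex neighborhoods of each \(K_{n-1,i}\) containing no extra critical points.
\item Points of \(\mathcal{D}\) in \(R_{n}(c_{n})\) outside the subregions must become critical.
\end{enumerate}
For the second issue, I would enlarge \(K\) by adding tiny closed balls around these new points, with a local model such as \(a+\sum_{j}(z_{j}-p_{j})^{2}\) on each ball. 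Tiny disjoint balls added to a polynomially convex set keep it polynomially convex if they are small and placed away from \(K\); I expect this needs a lemma or a careful choice, and it is the step I consider the main obstacle. Forstneri\v{c}'s theorem then gives \(\tilde f\) with critical set exactly the prescribed finite set \(P_{n}(c_{n})\), within \(\epsilon_{n}\) on the enlarged \(K\).

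Passage to the limit. Choose \(\epsilon_{n}\) summable and small enough that the limit keeps exactly the prescribed critical points. To do this, also fix shrinking neighborhoods \(V\) of each critical point on which \(\nabla\) of the limit has an isolated nondegenerate zero (Hurwitz/Rouché for the gradient map: each critical point is nondegenerate, so a degree argument applies). In addition, require \(|\nabla f|\) to be bounded below on \(K\setminus V\) and make later errors smaller than that bound, using Cauchy estimates on slightly smaller compacts. Since later regions strictly contain earlier ones in their interiors (item~\eqref{item:toast-directed}), Cauchy estimates apply with interior margins. The sequence converges locally uniformly on each orbit to an entire \(F_{x}\) (item~\eqref{item:toast-exhaustive}) whose critical set is exactly \(-\phi(x)\). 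Injectivity then follows as in the first paragraph.
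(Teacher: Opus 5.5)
Your proposal is correct and follows the paper's overall strategy. The shared steps are: a separating cross-section $\mathcal{D}$ from Lemma~\ref{lem:injective-cross-sections} with cocycle values in a countable group; induction over the polynomially convex toast using Theorem~\ref{thm:forstneric} to make the critical set exactly the positions of $\mathcal{D}$; countably many configurations, which gives Borelness; Cauchy estimates on $\epsilon_n$ to keep new critical points from appearing; and injectivity from $\phi$. The differences are in the sub-steps, and in both places you work harder than needed.

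\textbf{Points of $\mathcal{D}$ outside the subregions.} The step you flag as the main obstacle does not arise in the paper. In the proof of Lemma~\ref{lem:injective-cross-sections}, the norm is rescaled so that the adjoined points lie inside $U\cdot c_0 \subseteq R_0(c_0)$, using item~\eqref{item:toast-lacunary}; that is, $\mathcal{D}\subseteq X_0$. By items~\eqref{item:toast-coherent}--\eqref{item:toast-layered}, every point of $\mathcal{D}$ in $R_n(c_n)$ then lies in some $R_0\subseteq R_{n-1}(c^i_{n-1})\subseteq R_n(c_n)$. So the prescribed critical set already sits in the polynomially convex set $\bigsqcup_i K_{n-1,i}$, and Theorem~\ref{thm:forstneric} applies with no enlargement.

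Your ball-adding claim is nevertheless true and easy, via Kallin's lemma. If $p\notin K$, choose a polynomial $q$ with $q(p)=1$ and $\sup_K|q|<1$. For small $r$ the planar hulls of $q(K)$ and $q(\bar B(p,r))$ are disjoint, so $K\cup \bar B(p,r)$ is polynomially convex. Your route therefore works, at the cost of this extra lemma.

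Your issue (1) is moot. Each $f_{n-1,i}$ was itself produced by Theorem~\ref{thm:forstneric} (or by Forstneri\v{c}'s corollary at stage $0$), so its critical set in all of $\C^d$ is already the finite prescribed set.

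\textbf{The limit step.} You do not need nondegeneracy or a degree argument. Theorem~\ref{thm:forstneric} as stated does not give you nondegeneracy anyway; maintaining it would require Cauchy estimates on Hessians or the jet-interpolation form of Forstneri\v{c}'s theorem.

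The points of $\mathcal{D}$ stay critical for a simpler reason: $\nabla F_m$ vanishes there for every $m$, and gradients converge. New critical points are excluded by the bookkeeping you also sketch, which the paper makes precise:
\begin{enumerate}
\item Remove the $2^{-n}$-neighborhoods of $\mathcal{D}$ and of $\partial R_n(c_n)$.
\item Let $\delta_n$ be the infimum of $\norm{\nabla F_n}$ on what remains, and force $\delta_n<\delta_{n-1}/2$.
\item Choose $\epsilon_{n}$ so that $\norm{\nabla F_{n}-\nabla F_{n-1}}<\delta_{n-1}/2$ on those sets.
\end{enumerate}
This yields $\norm{\nabla F(x)}\ge \delta_n-\sum_{i\ge n}\delta_i/2>0$ for every $x\notin\mathcal{D}$.

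Also, no rational-grid thinning of $\mathcal{C}_0$ is needed. Items~\eqref{item:toast-disjoint}, \eqref{item:toast-lacunary} and \eqref{item:toast-rational} already make $\mathcal{C}_0$ lacunary with countably many cocycle values.
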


\begin{proof}
  Let \((\mathcal{C}_{n}, \lambda_{n})_{n}\) be a polynomially convex Borel toast.  Apply
  Lemma~\ref{lem:injective-cross-sections} to the cross-section \(\mathcal{C}_{0}\) to obtain
  a separating cross-section \(\mathcal{D}\).  We may assume without loss of generality that
  \(\mathcal{D} \subseteq X_{0}\), i.e., that the added points of \(\mathcal{D}\) are sufficiently close
  to points of \(\mathcal{C}_{0}\).

  We now proceed with an inductive construction as described in Section~\ref{sec:polyn-conv-borel}.  More precisely, we partition
  \(\mathcal{C}_{0} = \bigsqcup_{k}\mathcal{C}_{0,k}\) based on the shape of regions
  \(\lambda_{0}(c_{0})\) and the location of points from \(\mathcal{D}\) inside them.  In other
  words, if \(c_{0}, c'_{0} \in \mathcal{C}_{0,k}\) then
  \(\lambda_{0}(c_{0}) = \lambda_{0}(c'_{0})\) and \(z \cdot c_{0} \in \mathcal{D}\) if and only if
  \(z \cdot c'_{0} \in \mathcal{D}\) for all \(z \in \lambda_{0}(c_{0})\).  For each \(k\), pick a
  function \(f_{0,k} \in \holom[d]\) whose critical points are precisely the elements of
  \(\{z \in \lambda_{0}(c_{0}) : z \cdot c_{0} \in \mathcal{D}\}\)
  (cf.~\cite[Cor.~2.2]{forstneric2003}).  Set
  \(F_{0} : X_{0} \to \C\) by \(F_{0}(x) := f_{0,k}(\rho(\pi_{0}(x), x))\) whenever
  \(x \in \pi_{0}^{-1}(\mathcal{C}_{0,k})\).  The function \(F_{0}\) is Borel regardless of how the
  functions \(f_{0,k}\) were chosen.

  Now for the inductive step.  The main idea, as advertised earlier, is to consider a region
  \(R_{n}(c_{n})\), \(c_{n} \in \mathcal{C}_{n}\), together with its subregions
  \(R_{n-1}(c_{n-1}^{i})\), \(1 \le i \le m\), where \(c_{n-1}^{i} \in \mathcal{C}_{n-1}\). The
  function \(F_{n-1} : X_{n-1} \to \C\) is defined on each \(R_{n-1}(c_{n-1}^{i})\) separately and
  restricts to an entire function on each of these regions.  By Forstneri{\v c}'s
  Theorem~\ref{thm:forstneric}, we can find an entire function \(f : \C^{d} \to \C\) whose critical
  points in \(\lambda_{n}(c_{n})\) are exactly the elements of
  \(\{z \in \lambda_{n}(c_{n}) : z \cdot c_{n} \in \mathcal{D}\} \) and such that for
  \(F_{n}(x) = f(\rho(\pi_{n}(x),x))\) we have
  \[\sup\{|F_{n}(x) - F_{n-1}(x)| :x \in R_{n-1}(c_{n-1}^{i})\} < \epsilon_{n} \quad \text{for all } 1 \le i
    \le m.\]
  The choice of the function \(f\) depends only on the shape of regions \(R_{k}(c_{k})\), \(k < n\), inside
  \(R_{n}(c_{n})\) and on the location of critical points \(\mathcal{D}\) inside that region.  Once
  again, we have only countably many possible configurations, so we can construct the desired \(F_{n}\) using
  only countably many different functions~\(f\).

  It remains to specify \(\epsilon_{n}\) in the inductive step of the construction.  Taking
  \(\epsilon_{n} \le 2^{-n}\) ensures that \((F_{n})_{n}\) converges locally uniformly within
  orbits, and the limit function \(F\) is therefore a Borel entire function.  However, while
  the critical points of each \(F_{n}\) are precisely \(\mathcal{D}\) by construction, new critical
  points may appear in the limit\footnote{When \(d = 1\), by Hurwitz's theorem, this is equivalent to
    ensuring that \(F_{x}\) is non-constant for every \(x\).}.  So, we need to choose \(\epsilon_{n}\) to be small enough
  to ensure this does not happen.

  The choice of \(\epsilon_{n}\) is as follows.  At step \(n\), let
  \(B_{2^{-n}} \subseteq \C^{d}\) denote the closed ball of radius \(2^{-n}\) centered at the
  origin.  Define
  \[R'_{n}(c_{n}) := \{x \in R_{n}(c_{n}) : B_{2^{-n}} \cdot x \subseteq R_{n}(c_{n})\}
    \setminus (B_{2^{-n}} \cdot \mathcal{D}),\]
  i.e., the region \(R_{n}(c_{n})\) with the \(2^{-n}\)-neighborhoods of its boundary and of the
  critical points removed.  By construction, the critical points of \(F_{n-1}\) are exactly
  \(\mathcal{D}\), so for each \(c_{n-1} \in \mathcal{C}_{n-1}\),
  \[\delta_{n-1}(c_{n-1}) := \inf\{\norm{\nabla F_{n-1}(x)} : x \in R'_{n-1}(c_{n-1})\} > 0.\]
  Decreasing \(\delta_{n-1}(c_{n-1})\) if necessary, we may assume that \(\delta_{n-1}(c_{n-1}) <
  \delta_{n-2}(c_{n-2})/2 \) for any \(c_{n-2} \in \mathcal{C}_{n-2}\) such that \(R_{n-2}(c_{n-2})
  \subseteq R_{n-1}(c_{n-1})\).

  We set \(\epsilon_{n} = \epsilon_{n}(c_{n})\) to be so small that \(\epsilon_{n} \le 2^{-n}\) and
  \[\sup \{\norm{\nabla F_{n}(x) - \nabla F_{n-1}(x)} : x \in R'_{n-1}(c^{i}_{n-1})\}
    < \delta_{n-1}(c^{i}_{n-1})/2 \quad \text{for all } 1 \le i \le m.\]
  That such \(\epsilon_{n}\) exists follows from Cauchy's estimates.

  This choice of \(\epsilon_{n}\) ensures that no point outside of \(\mathcal{D}\) can be a critical
  point of \(F\).  Indeed, any \(x \notin \mathcal{D}\) satisfies \(x \in R'_{n}(\pi_{n}(x))\) for
  all sufficiently large~\(n\).  We can estimate \(\norm{\nabla F(x)}\) as follows.
  \begin{displaymath}
    \begin{aligned}
      \norm{\nabla F(x)}
      &= \lim_{m} \norm{\nabla F_{m}(x)} = \lim_{m}\norm{\nabla F_{n}(x) +
        \sum_{i=n}^{m-1}(\nabla F_{i+1}(x) - \nabla F_{i}(x))} \\
      &\ge \norm{\nabla F_{n}(x)} - \sum_{i=n}^{\infty}\norm{\nabla F_{i+1}(x) - \nabla F_{i}(x)} \\
      &\ge \delta_{n}(\pi_{n}(x)) - \sum_{i \ge n} \delta_{i}(\pi_{i}(x))/2 > 0.
    \end{aligned}
  \end{displaymath}
  We conclude that the critical points of \(F\) are exactly \(\mathcal{D}\).  Since the
  cross-section \(\mathcal{D}\) is separating, all entire functions \(F_{x}\) are distinct and the
  factor map \(x \mapsto F_{x}\) is injective.
\end{proof}

Combining Theorem~\ref{thm:injective-measurable-entire} with Theorem~\ref{thm:glucksam-weiss}, we
get the following.

\begin{corollary}[store=maincor]
  \label{cor:injective-measurable-entire}
  Every free probability measure-preserving action of \(\C^{d}\) on a standard probability
  space admits a separating measurable entire function.
\end{corollary}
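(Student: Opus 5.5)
The corollary follows by combining Theorem~\ref{thm:glucksam-weiss} with Theorem~\ref{thm:injective-measurable-entire}, with some care about invariant sets. Let \(\C^{d} \acts X\) be a free probability measure-preserving action on \((X,\mu)\).

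First, I apply Theorem~\ref{thm:glucksam-weiss}. It gives an invariant Borel set \(X' \subseteq X\) with \(\mu(X') = 1\) and a polynomially convex Borel toast for the restricted action \(\C^{d} \acts X'\). This restricted action is a free Borel action on a standard Borel space, since an invariant Borel subset of a standard Borel space is standard.

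Next, I apply Theorem~\ref{thm:injective-measurable-entire} to \(\C^{d} \acts X'\). This yields a Borel map \(F' : X' \to \C\) with the following properties:
\begin{enumerate}
\item each \(F'_{x}\) is entire for \(x \in X'\);
\item the map \(x \mapsto F'_{x}\) is injective on \(X'\).
\end{enumerate}

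Then I extend \(F'\) to all of \(X\) by setting \(F = F'\) on \(X'\) and \(F = 0\) on \(X \setminus X'\). Since \(X'\) is Borel, \(F\) is Borel. Its restriction to the invariant full-measure set \(X'\) is Borel entire, so \(F\) is a measurable entire function in the sense of the introduction. The factor map \(x \mapsto F_{x}\) is injective on \(X'\), because \(F_{x} = F'_{x}\) for \(x \in X'\) by invariance of \(X'\). By Definition~\ref{def:separating-measurable-entire}, \(F\) is therefore separating.

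The only point requiring attention is that all constructions take place inside the invariant set \(X'\), so that orbits never leave it. Invariance of \(X'\) guarantees this. There is no substantive obstacle beyond this bookkeeping.
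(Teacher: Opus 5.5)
Your proposal is correct and is exactly the paper's argument: apply Theorem~\ref{thm:glucksam-weiss} to obtain an invariant full-measure Borel set carrying a polynomially convex Borel toast, then apply Theorem~\ref{thm:injective-measurable-entire} to the restricted action. Your extra bookkeeping (noting that \(X'\) is standard Borel and extending \(F'\) by zero off \(X'\)) only makes explicit what the paper leaves implicit.
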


It is natural to wonder whether the restriction to a subset of full measure in
Corollary~\ref{cor:injective-measurable-entire} is really necessary.  This question is closely
related to the following.

\begin{question}
  Do polynomially convex Borel toasts exist for arbitrary free Borel actions of \(\C^{d}\)?
\end{question}

\begin{remark}
  \label{rem:transitive-entire}
  The paper of Gl\"ucksam and Weiss proves more than just the existence of measurable entire
  functions.  They construct dense measurable entire functions, where all functions
  \(F_{x}\) have dense orbits inside \(\holom[d]\) under the argument shift action.  This is
  achieved by ensuring that the Borel toast has regions \(R_{n}\) with arbitrarily large ``free
  space'' on each orbit: for any compact \(K \subseteq \C^{d}\) and any \(x \in X\), there exist
  \(n\), \(c_{n} \in \mathcal{C}_{n}\), and \(z \in \C^{d}\) such that \(x E c_{n}\) and
  \[K + z \subseteq \lambda_{n}(c_{n}) \setminus \bigsqcup_{i=1}^{m}(\lambda_{n-1}(c_{n-1}^{i}) +
    \rho(c_{n}, c_{n-1}^{i})).\]
  For a given family \((p_{n})_{n}\) dense in \(\holom[d]\), the construction in~\cite{glucksam2025}
  ensures that \(F_{n}\) approximates \(p_{n}\) on a large ball \((K + z) \cdot c_{n}\) contained in
  the corresponding region \(R_{n}(c_{n})\).  This modification can be incorporated into the
  construction in Theorem~\ref{thm:injective-measurable-entire}.  In other words, if a Borel action
  \(\C^{d} \acts X\) admits a polynomially convex Borel toast with arbitrarily large free regions in
  the sense above, then one can modify the construction in
  Theorem~\ref{thm:injective-measurable-entire} to build a separating Borel entire function \(F\)
  such that the orbit of \(F_{x}\) is dense in \(\holom[d]\) for each \(x\).  A generic entire
  function has a discrete set of critical points~\cite[Cor.~8.9.3(c)]{forstneric2017}.  Therefore,
  in constructing the cross-section \(\mathcal{D}\) as in Lemma~\ref{lem:injective-cross-sections},
  we just need to add points \(c'\) such that \(\rho(c, c') \not \in \Gamma\), where \(\Gamma\) is
  the countable group generated by the values \(\rho(c,x)\) for \(c \in \mathcal{C}\) and points
  \(x\) corresponding to critical points of \(p_{n}\) added during the construction.  This allows us to
  distinguish critical points that come from the separating cross-section \(\mathcal{D}\) from those
  that come from the functions \(p_{n}\), ensuring, once again, that \(F_{x}\) has different sets of critical
  points for different \(x\).
\end{remark}

\bibliography{refs.bib}
\bibliographystyle{plain}

\end{document}